\def\bP{\Bbb P}
\def\tY{\tilde{Y}}
\def\bY{{\Bbb Y}}
\def\cO{\Cal O}
\def\codim{\operatorname{codim}}
\def\bZ{\Bbb Z}
\def\bQ{\Bbb Q}
\def\bG{\Bbb G}
\def\bR{\Bbb R}
\def\bA{\Bbb A}
\def\oX{\overline{X}}
\def\cF{\Cal F}
\def\tS{\tilde S}
\def\oS{\overline{S}}
\def\tS{\tilde{S}}
\def\oN{\overline{N}}
\def\cY{\Cal Y}
\def\cX{\Cal X}
\def\cM{\Cal M}
\def\Spec{\operatorname{Spec}}
\def\tM{\tilde{M}}
\def\oG{\overline G}
\def\ord{\operatorname{ord}}
\def\st#1{#1_{st}}
\def\cO{\Cal O}
\def\Spec{\operatorname{Spec}}
\def\pr{\operatorname{pr}}
\def\Hom{\operatorname{Hom}}
\def\Cal{\mathcal}
\def\arrow{\mathop{\longrightarrow}\limits}
\def\Efi#1#2#3#4#5{\displaystyle
#1\!\!-\!\!#2
\!\!-\!\!#3
\!\!-\!\!#4
\hskip-24.2pt\lower4.5pt\hbox{${\scriptstyle|}
\hskip-3.35pt\lower6pt\hbox{$#5$}$}}
\def\Evia#1#2#3#4#5{\displaystyle
#1\!\!-\!\!#2
\!\!-\!\!#3
\hskip-24.2pt\lower4.5pt\hbox{${\scriptstyle|}
\hskip-3.35pt\lower6pt\hbox{$#4\!\!-\!\!\!-\!\!\!-\!\!$}$\hskip2.3pt${\scriptstyle|}
\hskip-3.35pt\lower6pt\hbox{$#5$}$}}
\def\Ezia#1#2#3#4{\displaystyle
#1\!\!-\!\!#2
\hskip-14.8pt\lower4.5pt\hbox{${\scriptstyle|}
\hskip-3.35pt\lower6pt\hbox{$#3\!\!-\!\!$}$\hskip2.3pt${\scriptstyle|}
\hskip-3.35pt\lower6pt\hbox{$#4$}$}}
\def\Efia#1#2#3#4#5#6{\displaystyle
#1\!\!-\!\!#2
\!\!-\!\!#3
\!\!-\!\!#4
\hskip-24.2pt\lower4.5pt\hbox{${\scriptstyle|}
\hskip-3.35pt\lower6pt\hbox{$#5$}$\hskip5.7pt${\scriptstyle|}
\hskip-3.35pt\lower6pt\hbox{$#6$}$}}
\def\Esi#1#2#3#4#5#6{\displaystyle
#1\!\!-\!\!#2
\!\!-\!\!#3
\!\!-\!\!#4\!\!-\!\!#5
\hskip-24.2pt\lower4.5pt\hbox{${\scriptstyle|}
\hskip-3.35pt\lower6pt\hbox{$#6$
\lower3pt\hbox{\ }}$}}
\def\Esia#1#2#3#4#5#6#7{\displaystyle
#1\!\!-\!\!#2
\!\!-\!\!#3
\!\!-\!\!#4\!\!-\!\!#5
\hskip-24.2pt\lower4.5pt\hbox{${\scriptstyle|}
\hskip-3.35pt\lower6pt\hbox{$#6$\hskip-3.8pt\lower4.5pt\hbox{${\scriptstyle|}
\hskip-3.35pt\lower6pt\hbox{$#7$}$}}
\lower3pt\hbox{\ }$}}
\def\Ese#1#2#3#4#5#6#7{\displaystyle
#1\!\!-\!\!#2
\!\!-\!\!#3
\!\!-\!\!#4\!\!-\!\!#5\!\!-\!\!#6
\hskip-33.6pt\lower4.5pt\hbox{${\scriptstyle|}
\hskip-3.35pt\lower6pt\hbox{$#7$
\lower3pt\hbox{\ }
}$}}
\def\Esea#1#2#3#4#5#6#7#8{\displaystyle
#1\!\!-\!\!#2
\!\!-\!\!#3
\!\!-\!\!#4\!\!-\!\!#5\!\!-\!\!#6\!\!-\!\!#7
\hskip-33.6pt\lower4.5pt\hbox{${\scriptstyle|}
\hskip-3.35pt\lower6pt\hbox{$#8$
\lower3pt\hbox{\ }
}$}}
\def\Eei#1#2#3#4#5#6#7#8{\displaystyle
#1\!\!-\!\!#2
\!\!-\!\!#3
\!\!-\!\!#4\!\!-\!\!#5\!\!-\!\!#6\!\!-\!\!#7
\hskip-43.2pt\lower4.5pt\hbox{${\scriptstyle|}
\hskip-3.35pt\lower6pt\hbox{$#8$
\lower3pt\hbox{\ }
}$}}
\def\Eeia#1#2#3#4#5#6#7#8#9{{\displaystyle
#1\!\!-\!\!#2
\!\!-\!\!#3
\!\!-\!\!#4\!\!-\!\!#5\!\!-\!\!#6\!\!-\!\!#7\!\!-\!\!#8
\hskip-52.2pt\lower4.5pt\hbox{${\scriptstyle|}
\hskip-3.35pt\lower6pt\hbox{$#9$
\lower3pt\hbox{\ }
}$}}}
\def\arrow{\mathop{\longrightarrow}\limits}
\def\trop{\operatorname{trop}}
\def\Trop{\operatorname{Trop}}
\def\os{\overline{S}}
\def\tN{\tilde{N}}
\def\tS{\tilde{S}}
\def\ts{\tS}
\def\ts7{\tilde{S}_7}
\def\cN{\Cal N}
\def\bP{\Bbb P}
\def\Hom{\operatorname{Hom}}
\def\tY{\tilde{Y}}
\def\cO{\Cal O}
\def\tm{\tilde{m}}
\def\codim{\operatorname{codim}}
\def\bZ{\Bbb Z}
\def\bQ{\Bbb Q}
\def\bG{\Bbb G}
\def\bR{\Bbb R}
\def\bA{\Bbb A}
\def\oX{\overline{X}}
\def\cF{\Cal F}
\def\tS{\tilde S}
\def\oS{\overline{S}}
\def\tS{\tilde{S}}
\def\oN{\overline{N}}
\def\cY{\Cal Y}
\def\cX{\Cal X}
\def\cM{\Cal M}
\def\Spec{\operatorname{Spec}}
\def\tM{\tilde{M}}
\def\ord{\operatorname{ord}}
\def\Cal{\mathcal}
\def\oX{\overline X}
\def\Exc{\operatorname{Exc}}
\def\val{\operatorname{val}}
\def\oN{\overline{N}}
\def\os7p{\oS_7'}
\def\os7{\oS_7}
\def\on6{\oN_6}
\def\n6{\oN_6}
\def\cL{\Cal L}
\newtheoremstyle{mystyle}{}{}{\itshape}{}{\scshape}{.}{ }{}
\theoremstyle{mystyle}
\newtheorem{Theorem}{Theorem}[section]
\newtheorem{Proposition}[Theorem]{Proposition}
\newtheorem{Lemma}[Theorem]{Lemma}
\newtheorem{Corollary}[Theorem]{Corollary}
\newtheorem{Question}[Theorem]{Question}
\newtheoremstyle{myreview}{}{}{}{}{\scshape}{.}{ }{}
\theoremstyle{myreview}
\newtheorem{Definition}[Theorem]{Definition}
\newtheorem{Example}[Theorem]{Example}
\newtheorem{Review}[Theorem]{}
\newcounter{et}[Theorem]
\def\cooltag{\tag{\arabic{section}.\arabic{Theorem}.\arabic{et}}\addtocounter{et}{1}}
\begin{document}
\title{On a Question of B.~Teissier}
\author{Jenia Tevelev}

\maketitle

\begin{abstract}
We answer positively a question of B.~Teissier on existence of resolution
of singularities inside an equivariant map of toric varieties.
\end{abstract}

\section{Introduction}

It is sometimes convenient to study an algebraic variety if it is embedded in a toric variety. 
B. Teissier asks in~\cite{Tei} if it is possible to perform 
resolution of singularities of an arbitrary algebraic variety inside an equivariant map of toric varieties.
The following theorem provides an affirmative answer, in fact we show that any embedded resolution
of singularities is induced by an equivariant map of toric varieties.

\begin{Theorem}\label{mainTh} Consider an embedded resolution of singularities of $X$, or more generally
any commutative diagram of irreducible projective algebraic varieties
\begin{equation}\label{zfbgsz}
\begin{matrix}
Y &\hookrightarrow &W\\
\downarrow&&{}\ \downarrow^{\pi} \\
X &\hookrightarrow &S\\
\end{matrix}
\cooltag\end{equation}
where 
\begin{itemize}
\item $W$ and $S$ are smooth;
\item $\pi$ is birational and $D:=\Exc(\pi)$ is a divisor with simple normal crossings;
\item $Y$ is smooth and intersects $D$ transversally.
\end{itemize}
Then we can extend this diagram to a commutative diagram
$$
\begin{matrix}
Y &\hookrightarrow &W &\hookrightarrow &Z\\
\downarrow&&{}\ \downarrow^{\pi}&&\downarrow \\
X &\hookrightarrow &S &\hookrightarrow &\bP^N\\
\end{matrix}
$$
where 
\begin{itemize}
\item $Z$ is smooth toric variety of an algebraic torus $\bG_m^N=\bP^N\setminus\bigcup_i H_i$ for some choice of hyperplanes $H_0,\ldots,H_N\subset\bP^N$;
\item $Z\to\bP^N$ is a toric morphism;
\item $Y$ and $W$ intersect the toric boundary of $Z$ transversally.
\end{itemize}
Moreover, we can assume that the embedding $S\hookrightarrow\bP^N$ is given by a complete linear system
associated with a sufficiently high multiple of any ample divisor on~$S$. 
\end{Theorem}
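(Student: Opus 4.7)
The plan is to embed $S$ via a generic complete linear system of a sufficiently large multiple of $A$, pull back the coordinate hyperplanes to get an SNC configuration on $W$ compatible with $D$ and $Y$, and then construct $Z$ as a smooth toric refinement of $\bP^N$ designed to resolve the resulting toroidal structure.

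\emph{Step 1: Embedding and generic choice of hyperplanes.} Fix an ample divisor $A$ on $S$ and take $m$ large enough that $L:=mA$ is very ample and the usual Bertini-style transversality arguments apply. Let $V=H^0(S,L)$ with $\dim V=N+1$, giving the complete linear system embedding $S\hookrightarrow \bP^N=\bP(V^*)$. Choosing projective coordinates on $\bP^N$ is the same as choosing a basis $s_0,\ldots,s_N$ of $V$, equivalently a configuration of $N+1$ hyperplanes $H_i=\{s_i=0\}$. For a generic basis of $V$ the following hold: (i) the $H_i$ are in linear general position, so $\bP^N\setminus\bigcup_i H_i\cong \bG_m^N$; (ii) the divisors $\pi^*(H_i\cap S)$ on $W$, together with the components of $D$, form a simple normal crossings divisor $E_W$; (iii) $Y$ meets $E_W$ transversally. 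Condition (i) is open and non-empty on the space of bases; (ii) and (iii) follow from Bertini applied to the very ample (hence base-point-free) linear system $|L|$ and its pullback to $W$, using that $Y,W$ are smooth, $D$ is SNC, and $Y$ meets $D$ transversally.

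\emph{Step 2: Toroidal structure.} With these choices, the morphism $f\colon W\to\bP^N$ carries $E_W$ into the toric boundary $\bigcup_i H_i$. In formal coordinates $(z_1,\ldots,z_n)$ at a point of $W$ where $E_W=\{z_1\cdots z_k=0\}$, the ratios $f^*(s_i/s_j)$ are monomials in $z_1,\ldots,z_k$ up to units. Hence $(W,E_W)\to(\bP^N,\bigcup_i H_i)$ is a toroidal morphism in the sense of Kempf--Knudsen--Mumford--Saint-Donat, inducing a map of cone complexes $\Sigma(W,E_W)\to\Sigma_{\bP^N}$.

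\emph{Step 3: Construction of $Z$ by refining the fan.} Now refine $\Sigma_{\bP^N}$ to a smooth fan $\Sigma'$ with the same support such that the image of each cone of $\Sigma(W,E_W)$ lies in a single cone of $\Sigma'$. This is done in two stages: first take a common refinement with the finite collection of cone-images (producing a fan fine enough for compatibility), then apply toric resolution of singularities by iterated star subdivisions to make $\Sigma'$ smooth while preserving the containment property. Setting $Z=X_{\Sigma'}$ yields a smooth toric variety with a proper birational toric morphism $Z\to\bP^N$. The containment property gives, cone by cone and hence globally, a canonical lift of $f$ to a morphism $W\to Z$. Since $Z\to\bP^N$ is an isomorphism over the torus, the toric boundary of $Z$ pulls back to a subdivisor of $E_W$, and transversality of $W$ and $Y$ with it is inherited from (ii) and (iii).

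The main obstacle is Step 3: constructing a single refinement that is simultaneously smooth and fine enough for the toroidal map to lift as an actual morphism (not merely a rational map), and verifying that the local liftings from each cone patch together into a global morphism $W\to Z$. This is where the toroidal/toric machinery of KKMS is essential.
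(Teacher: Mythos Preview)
Your approach has a fundamental gap in Steps~1--2 that cannot be repaired within the framework you describe. By choosing the hyperplanes $H_i$ \emph{generically}, you guarantee (since $\pi(D)\subset S$ has codimension at least~$2$) that no $H_i$ contains any $\pi(D_j)$. Hence $\pi^*(H_i\cap S)$ has no exceptional component in its support, and the preimage in $W$ of the toric boundary $\bigcup_i H_i$ is exactly $\bigcup_i \tilde H_i$, missing $D$ entirely. Near a generic point of any $D_j$ (away from all $\tilde H_i$) the composite $f=\iota\circ\pi:W\to\bP^N$ lands in the open torus $\bG_m^N$; in your cone-complex language the ray of $D_j$ is sent to $0\in\Sigma_{\bP^N}$. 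Any refinement $\Sigma'$ of $\Sigma_{\bP^N}$ still sends this ray to $0$, and the lifted map $W\to Z=X_{\Sigma'}$ continues to factor through $\pi$ near generic points of $D_j$. Thus $W\to Z$ contracts $D$ and is not an embedding: you never obtain $W\hookrightarrow Z$, and the assertion that $W$ meets the toric boundary of $Z$ transversally cannot even be formulated. (More generally, KKMS-type refinement of the target of a toroidal morphism produces a lifted \emph{morphism}, not an embedding; nothing in your Step~3 addresses injectivity.)

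The paper confronts exactly this issue by choosing highly \emph{non}-generic coordinates on $\bP^N$. It first selects divisors $F_j$ on $W$ that are general in linear systems $|\cL(D_p)|$ with $\cL(D_p)$ very ample on $W$; ampleness forces each $F_j$ to meet every positive-dimensional fibre of $\pi$, so the images $\pi(F_j)\subset S$ all contain $\pi(D)$. The homogeneous coordinates $z_i$ are then taken to be products $s_0\cdots\hat s_i\cdots s_{r-1}$ of the equations of the $\pi(F_j)$, supplemented by further generic sections. This makes $W\setminus G\cong S\cap\bG_m^N$ with $G$ an SNC divisor on $W$ genuinely containing the $D_j$ as components. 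The embedding $W\hookrightarrow Z$ is then produced not by toroidal refinement but by directly verifying the Hacking--Keel--Tevelev criterion: for every stratum $W_S$ of $(W,G)$ the complement $U_S$ is affine with $\cO(U_S)$ generated by the lattice $M$ spanned by the $z_i/z_j$, and $M$ contains enough elements with prescribed valuations along the $G_i$. Only afterwards does one check that the resulting rational map $Z\dashrightarrow\bP^N$ is a morphism. Your outline supplies no substitute for either the special choice of coordinates or this embedding criterion.
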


The proof is not original: it is a souped-up version of the proof by Luxton and Qu
of \cite[Theorem 1.4.]{LQ} conjectured by the author.
It is based on a criterion of Hacking, Keel, and Tevelev from \cite[\S2]{HKT},
which shows that, given a pair $(W,G)$ 
of a smooth variety $W$ and a divisor $G\subset W$ with simple normal crossings,
which satisfy certain strong but easily verified conditions (see the next section), 
there exists an embedding of $W$ into a smooth toric variety $Z$
such that $G$ is the scheme-theoretic intersection of $W$ with the toric boundary of~$Z$.

In practice we may want to say more about $Z$. We can easily make $Z$ proper
by adding toric strata that don't intersect $W$. 
Applying a theorem of De Concini and Procesi \cite[Theorem 2.4]{CP}, it is easy to prove that

\begin{Corollary}\label{corollar}
Let $X$ be an irreducible subvariety of a smooth projective variety $S$
(over an algebraically closed 
field of characteristic $0$).
Then there exist
\begin{itemize}
\item a projective embedding $S\subset\bP^N$ (given by a sufficiently high multiple
of any ample divisor on $S$);
\item coordinate hyperplanes $H_0,\ldots,H_N\subset\bP^N$ such that $X\not\subset H_0\cup\ldots\cup H_N$;
\item a smooth projective toric variety $Z$ of an algebraic torus $\bG_m^N=\bP^N\setminus\bigcup_i H_i$
\end{itemize}
such that
\begin{itemize}
\item a toric morphism $Z\to \bP^N$ is a composition of blow-ups in smooth equivariant centers of codimension~$2$;
\item a proper transform $W$ of $S$ in $Z$ is smooth, intersects toric boundary of $Z$ transversally, and in particular
$D=\Exc(W\to S)$ is a divisor with simple normal crossings.
\item a proper transform $Y$ of $X$ in $Z$ is smooth and intersects the toric boundary of $Z$ (and in particular $D$) transversally.
\end{itemize}
\end{Corollary}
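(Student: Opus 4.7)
The plan is to combine Hironaka's embedded resolution with Theorem~\ref{mainTh} and then invoke the De Concini--Procesi factorization theorem.

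First I would apply Hironaka's embedded resolution of singularities to $X\subset S$, producing a birational $\pi\colon W\to S$ with $W$ smooth projective, $D=\Exc(\pi)$ a simple normal crossings divisor, and the proper transform $Y$ of $X$ in $W$ smooth and transversal to~$D$. Together with the inclusions $Y\hookrightarrow W$ and $X\hookrightarrow S$, this is exactly the input diagram~\eqref{zfbgsz} required by Theorem~\ref{mainTh}.

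Second, I would feed this diagram into Theorem~\ref{mainTh}, having first chosen any ample divisor on $S$. The output is a projective embedding $S\hookrightarrow\bP^N$ via a sufficiently high multiple of that divisor, coordinate hyperplanes $H_0,\ldots,H_N$ (with $X\not\subset\bigcup_iH_i$), and a smooth toric variety $Z$ with toric morphism $Z\to\bP^N$ such that both $W$ and $Y$ meet the toric boundary of $Z$ transversally. Using the paper's observation that $Z$ can be made proper by adjoining toric strata disjoint from $W$, together with the projectivity of $\bP^N$, I can arrange that $Z$ is projective.

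Third, to arrange that $Z\to\bP^N$ factors as a composition of blow-ups along smooth equivariant centers of codimension~$2$, I would apply \cite[Theorem 2.4]{CP}. This yields a smooth toric refinement $Z'\to Z$ such that the composite $Z'\to\bP^N$ is such a composition, and I replace $Z$ by $Z'$.

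The one thing that needs verification---and is the main, although routine, obstacle---is that the transversality of $W$ and $Y$ with the toric boundary is preserved under the further refinement $Z'\to Z$. This follows from a standard local computation for smooth toric blow-ups: at any point of $W\cap T$ with $T\subset Z$ a smooth equivariant blow-up center (necessarily contained in the toric boundary), there are local toric coordinates in which $W$ and $T$ are each cut out by subsets of the coordinates, and in such coordinates the proper transform of $W$ in the blow-up of $T$ remains smooth and transversal to the new toric boundary; the case $T\cap W=\emptyset$ is trivial, and the same reasoning applies to $Y$. Iterating over the finitely many codimension-$2$ blow-ups supplied by De Concini--Procesi establishes the smoothness and transversality of $W$ and $Y$ inside $Z'$, and the simple normal crossings property of $\Exc(W\to S)$ is then immediate from the transversality of $W$ with the toric boundary of $Z'$.
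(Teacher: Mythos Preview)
Your proof follows the paper's argument step for step: Hironaka, then Theorem~\ref{mainTh}, then a smooth proper toric compactification, then the De Concini--Procesi factorization \cite[Theorem 2.4]{CP}. The only divergence is in the last step, where you verify that transversality of $W$ and $Y$ with the toric boundary survives the further refinement $\tilde Z\to Z$.

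You do this by a hands-on local computation: in suitable coordinates, the codimension-$2$ equivariant center is $\{z_i=z_j=0\}$ and $W$ (resp.~$Y$) is cut out by coordinates disjoint from the toric ones, so the blow-up visibly preserves smoothness and transversality. This is correct, though the phrase ``$W$ and $T$ are each cut out by subsets of the coordinates'' deserves a word of care: $W$ is not cut out by \emph{toric} coordinates, but transversality guarantees that the toric coordinates $z_1,\ldots,z_k$ together with local equations for $W$ extend to a regular system of parameters, which is what your computation actually needs.

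The paper argues this point differently and more conceptually, invoking \cite[Th.~1.4 and Prop.~2.5]{Te}: transversality of $Y$ with the toric boundary is equivalent to smoothness of the multiplication map $\Psi\colon Y\times\bG_m^N\to Z$; the multiplication map $\tilde\Psi\colon\tilde Y\times\bG_m^N\to\tilde Z$ is a base change of $\Psi$ along $\tilde Z\to Z$, hence also smooth; and smoothness of $\tilde\Psi$ with $\tilde Z$ smooth gives back smoothness of $\tilde Y$ and transversality with the boundary. This argument is coordinate-free and works for \emph{any} toric refinement $\tilde Z\to Z$, not just a sequence of codimension-$2$ blow-ups, which is a mild conceptual advantage. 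Your local argument is perfectly adequate for the statement at hand.
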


The trade-off in this corollary is that 
the resolution of singularities $Y\to X$ could fail to be an isomorphism over a smooth locus of $X$.

I am grateful to B.~Teissier for explaining his conjecture and partial results on resolutions 
in ambient toric varieties and to the organizers of the ``Toric Geometry'' workshop at 
Mathematisches Forschungsinstitut Oberwolfach where these conversations took place.
The research was supported by NSF grant DMS-1001344.

\section{Proof of Theorem~\ref{mainTh} and Corollary~\ref{corollar}}

Essentially we would like to prove that $(W,D)$
embeds in a toric variety in such a way that $D$ is a scheme-theoretic intersection
with the toric boundary. This is not true in general, but the
 main idea is that this is going to work after we add a lot of random divisors to $D$. 

 Let $D_1,\ldots,D_m$ be irreducible components of $D$ and let
 $D_0:=\emptyset$.
 Choose an invertible sheaf $\cL$ on $W$ such that $\cL(D_i)$ is very ample for any~$i\ge0$.
Then
$$\cL\simeq\pi^*\cM\left(-\sum_{i=1}^m a_iD_i\right)$$
for some line bundle $\cM$ on $S$. By tensoring $\cM$ with an appropriate very ample line bundle on $S$,
we can arrange that $\cM$ is very ample and moreover is isomorphic to a tensor power of any given
ample line bundle on $S$. By Kodaira lemma~\cite[2.19]{FA},  $a_i>0$ for any $i$.
Let 
\begin{equation}\label{sdgbdbdbdgh}
\alpha=2\dim(W)-1+\max_{0\le i\le m} h^0(W, \cL(D_i))\quad\hbox{\rm and}\quad
r=\alpha(m+1)
\cooltag\end{equation}
Let 
$$F_0,\ldots,F_{r-1}\subset W$$
be divisors obtained by taking $\alpha$ general divisors from each linear system $|\cL(D_i)|$ for  $i=0,\ldots,m$.
Let 
$$s_0,\ldots,s_{r-1}\in H^0(S,\cM)$$ be equations
of divisors $\pi(F_0),\ldots,\pi(F_{r-1})\subset S$. Since $F_i$'s are general,
$\pi(F_i)\not\subset\pi(F_j)$ for $i\ne j$, and therefore sections
$$z_i:=s_0\ldots\hat s_i\ldots s_{r-1}\in H^0(S,\cM^{\otimes (r-1)}), \quad i=0,\ldots,r-1,$$
are linearly independent. Add general sections $z_{r},\ldots,z_N$ so that $z_0,\ldots,z_N$ is a basis of   
$H^0(S,\cM^{\otimes (r-1)})$.
Let $E_{r},\ldots,E_N\subset W$ be pull-backs of the hypersurfaces $(z_{r}=0),\ldots,(z_N=0)\subset S$.
We let 
$$G=D_1+\ldots+D_m+F_0+\ldots+F_{r-1}+E_{r}+\ldots+E_N\subset W.$$
By Bertini theorem, all components of $G$ are irreducible, smooth, and have simple normal crossings.
Also, $Y$ and $W$ intersect $G$ transversally.

Consider the embedding 
$S\hookrightarrow\bP^N$ given by a complete linear system of $\cM^{\otimes (r-1)}$
and homogeneous coordinates $z_0,\ldots,z_N$.
Let $\bG_m^N$ be the corresponding torus.
Since $D=\Exc(\pi)$ and $F_i$ is ample, we have $\pi(F_i)\supset \pi(D)$ for any $i=0,\ldots,r-1$.
Therefore, the map $\pi$ induces an isomorphism
$$W\setminus G\simeq S\cap\bG_m^N.$$
Let $I$ be the indexing set for irreducible components of $G$, so we have
$$G=\sum_{i\in I}G_i=D_1+\ldots+D_m+F_0+\ldots+F_{r-1}+E_{r}+\ldots+E_N.$$
Let 
$$M\subset\cO^*(W\setminus G)$$ 
be a sublattice
generated by $z_i/z_j$ for $i,j=0,\ldots,N$.

\begin{Lemma}\label{avoid}
Let $J\subset I$, $|J|\le 2\dim(W)-1$, and let $i\in I\setminus J$.
 Then there exists a subset $T\subset I\setminus J$ such that $i\in T$ and
 \begin{itemize}
 \item $U=W\setminus \bigcup\limits _{t\in T} G_t$ is affine and $\cO(U)$ is generated by $M\cap \cO(U)$.
 \item There exists $m\in M$ such that $\val_{G_i}m=1$, $\val_{G_j}m=0$ for any $j\in J$.
 \end{itemize}
\end{Lemma}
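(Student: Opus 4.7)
The plan is to prove both parts by explicit construction, with a case analysis on the type of $G_i$ (namely whether $G_i$ is some $D_k$, $F_k$, or $E_k$). The key preparatory step is to tabulate the valuations of the generating characters $z_a/z_b$ along each boundary divisor. From the factorisation $z_a = s_0\cdots\hat s_a\cdots s_{r-1}$ for $a<r$, together with the general position of $z_a$ for $a\ge r$, one extracts the formulas $\val_{F_k}(z_a/z_b) = [a<r,\, a\ne k] - [b<r,\, b\ne k]$ (with Iverson brackets), $\val_{E_k}(z_a/z_b) = \delta_{a,k}-\delta_{b,k}$, and, for $a,b<r$, $\val_{D_k}(z_a/z_b) = \delta_{k,i_a}-\delta_{k,i_b}$, where $i_a\in\{0,1,\ldots,m\}$ denotes the index of the linear system $|\cL(D_{i_a})|$ containing $F_a$ (adopting the convention $D_0 = \emptyset$).

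For part (b), the construction of $m$ is immediate from the table. If $G_i = F_k$, take $m = z_l/z_k$ for some $l\ne k$, $l\notin J$, with $F_l$ in the same linear system as $F_k$; such an $l$ exists because each group contains $\alpha \ge 2\dim(W)+1$ indices (using the very ampleness, hence $h^0\ge 2$, of $\cL(D_i)$), while $|J|\le 2\dim(W)-1$. If $G_i = D_k$, take $m = z_a/z_b$ with $F_a\in|\cL(D_k)|$ and $F_b\in|\cL|$, both chosen outside $J$. If $G_i = E_k$ with $k\ge r$, take $m = z_k/z_b$ for some $b\ge r$, $b\ne k$, $b\notin J$; this invokes the freedom to choose $\cM$ as a sufficiently high multiple of an ample line bundle so that $N-r$ exceeds $|J|$. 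In every case $\val_{G_i}(m) = 1$ and $\val_{G_j}(m) = 0$ for $j\in J$ follow at once from the table.

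For part (a), I would realise $U$ as the preimage $\pi^{-1}(V_A\cap S)$ of an affine open of $S$, where $V_A = \bigcap_{a\in A}\{z_a\ne 0\}\subset \bP^N$ for a carefully chosen $A\subset\{0,\ldots,N\}$. The key geometric input is that since $F_a$ is ample for $a<r$, we have $\pi(F_a)\supset\pi(D)$, hence any $a<r$ in $A$ forces $V_A\cap S\cap\pi(D)=\emptyset$. Then $\pi|_U$ is an isomorphism onto $V_A\cap S$, $U$ is affine, and $\cO(U)$ is a closed subring of the toric coordinate ring $\cO(V_A)$ whose generators $z_b/z_a$ all lie in $M\cap\cO(U)$. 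I would pick $A$ so that $T$ contains $G_i$ and includes an index $a_0<\alpha$ outside $J$, making $F_{a_0}\in|\cL|$ a very ample member of $T$. The main obstacle I foresee is the case when $J$ contains several $D$-components: including any $a<r$ in $A$ forces all $D_k$'s into $T$, which conflicts with $T\subset I\setminus J$. Resolving this requires a $T$ that does not come from a single pullback $\pi^{-1}(\bigcup_{a\in A}\{z_a=0\})$; one would exploit the combinatorial slack $\alpha > |J|+1$ to assemble $T$ by hand, and the technical heart is verifying $\cO(U) = \bk[M\cap\cO(U)]$ in this modified setup.
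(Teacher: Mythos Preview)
Your treatment of the second bullet (the unit $m$) is correct and amounts to the paper's own case split, packaged via a valuation table rather than by direct divisor comparison; the specific choices of $m$ you propose are exactly those the paper makes.

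The first bullet, however, has a genuine gap that you yourself flag but do not close. Realising $U$ as $\pi^{-1}(V_A\cap S)$ forces \emph{every} exceptional component $D_k$ into $T$ the moment $A$ contains any index $a<r$ (since $\pi(F_a)\supset\pi(D)$), which is fatal whenever $J$ contains a $D$-type divisor. If instead $A\subset\{r,\ldots,N\}$, then $T$ consists only of $E$-divisors and cannot contain $i$ when $G_i$ is of $F$- or $D$-type. So the ``pullback from $\bP^N$'' mechanism simply cannot produce the required $T$ in general, and your fallback of assembling $T$ ``by hand'' leaves both affineness of $U$ and the generation of $\cO(U)$ unproved.

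The paper's idea is different: it works on $W$, not on $S$. For $G_i$ of $F$-type in $|\cL(D_p)|$ (resp.\ of $D$-type, $G_i=D_p$), one chooses $T\subset I\setminus J$ consisting of $F$-divisors in $|\cL(D_p)|$ (resp.\ one $G_j\in|\cL|$ together with $F$-divisors in $|\cL(D_p)|$) so that they (resp.\ $D_p+G_j$ together with them) form a \emph{basis} of $|\cL(D_p)|$; the definition of $\alpha$ guarantees enough such divisors avoid~$J$. The very ample system $|\cL(D_p)|$ then embeds $W$ in $\bP^q$, and $U$ is the preimage of the big torus --- hence affine, with $\cO(U)$ generated by coordinate ratios. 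The remaining point, that these ratios lie in $M$, is handled by comparing each ratio $f$ to the obvious candidate $z_\alpha/z_\beta$ with the same zero/pole pattern on the $F$'s: the quotient $f\cdot(z_\alpha/z_\beta)^{-1}$ has divisor supported on $D=\Exc(\pi)$, hence is constant. For the $E$-type case the paper uses a diagonal embedding $W\hookrightarrow\bP^q\times\bP^n$ built from a basis of $|\cL|$ and $n+1$ many $E$-divisors (invoking $N-r>3n$). The moral: construct the affine chart from a very ample linear system on $W$, not from the fixed embedding of $S$ in~$\bP^N$.
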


\begin{proof} 
 We consider three cases.

{\bf Case I: $G_i$ is an F-type divisor, i.e.~$G_i\in |\cL(D_p)|$ for some $p$.}
By definition~\eqref{sdgbdbdbdgh}  of $\alpha$, we can  choose a subset $T=\{i,k_1,\ldots,k_q\}\subset I\setminus J$ such that
$G_i,  G_{k_1},\ldots,G_{k_q}$ is a basis of $|\cL(D_p)|$.
Then $U$ is affine as a closed subvariety
of an algebraic torus $\bG_m^{q}$ (the complement to the union 
of coordinate hyperplanes in $\bP^{q}=\bP H^0(W,\cL(D_p))^\vee$).
Moreover, $\cO(U)$ is generated by ratios of coordinate functions in $\bP^{q}$.
As a rational function on~$W$, such a function $f$ has a simple zero at $G_i$, a simple pole at some~$G_{k_s}$,
and is invertible along other components of $G$.
Notice that the function $f_0={s_{i-m-1}\over s_{k_s-m-1}}={z_{k_s-m-1}\over z_{i-m-1}}$
has the same property, except that apriori it may also have some zeros and poles along the exceptional divisor $D$.
But then $f/f_0$ has zeros and poles only along the exceptional divisor $D$, and so it must be a constant.
Thus, any of these rational functions can be used as $m$.

{\bf Case II: $G_i$ is a D-type divisor, i.e.~$G_i=D_p$ for some $p$.}
Choose a subset $T=\{i,j,k_1,\ldots,k_q\}\subset I\setminus J$ such that $G_j\in|\cL|$,
$G_{k_1},\ldots,G_{k_q}\in|\cL(D_p)|$ and 
$G_i+G_j,  G_{k_1},\ldots,G_{k_q}$ is a basis of $|\cL(D_p)|$.
Then $U$ is affine as a closed subvariety
of an algebraic torus $\bG_m^{q}$ (the complement to the union 
of coordinate hyperplanes in $\bP^{q}=\bP H^0(W,\cL(D_p))^\vee$).
Moreover, $\cO(U)$ is generated by ratios of coordinate functions in $\bP^{q}$.
As a rational function on $W$, such a function has a simple zero at $D_p$ and $G_j$, a simple pole at some $G_{k_s}$,
and is invertible along other components of $G$.
As in Case I, it follows that this function must be equal to ${s_{j-m-1}\over s_{k_s-m-1}}={z_{k_s-m-1}\over z_{j-m-1}}$ 
(up to a scalar multiple).
We can take any of them as $m$.

{\bf Case III: $G_i$ is an E-type divisor, i.e.~$G_i=E_p$ for some $p$.} Let $n=\dim W$.
By Riemann--Roch, we can substitute $\cM$ with its tensor power if necessary to ensure that $N-r>3n$.
It follows that we can find a subset
$T=\{i_0,\ldots,i_q,j_1,\ldots,j_{n+1}\}\subset I\setminus J$, where
$G_{i_0},\ldots,G_{i_q}$ forms a basis of $|\cL|$, $G_{j_1},\ldots,G_{j_{n+1}}$ are $E$-type divisors, and $G_{j_1}=E_p$.
Let $\bP^{q}=\bP H^0(W,\cL)^\vee$ with coordinates that correspond to $G_{i_0},\ldots,G_{i_q}$.
Since $G$ is a divisor with normal crossings, 
$G_{j_1}\cap\ldots\cap G_{j_{n+1}}=\emptyset$. It follows that $S\subset\bP^N$
misses the intersection of the corresponding  $n+1$ coordinate hyperplanes.
Projecting from this subspace gives a morphism $W\to\bP^n$, where $\bP^n$ has coordinate hyperplanes that correspond
to $G_{j_1},\ldots,G_{j_{n+1}}$. Consider a diagonal embedding
$$W\hookrightarrow\bP^q\times\bP^n.$$
Then $U$ is naturally a closed subvariety
of an algebraic torus $\bG_m^{q}\times\bG_m^n$ and $\cO(U)$ 
is generated by ratios of coordinate functions in $\bP^{q}$ and ratios of coordinate functions in $\bP^n$.
Arguing as in the previous cases, these ratios are equal to some $z_\alpha/z_\beta$, where $\alpha,\beta< r$ (in case of $\bP^q$)
and $\alpha,\beta\ge r$ (in case of $\bP^n$). One of the latter ones can be used as~$m$.
\end{proof}

\begin{Lemma}\label{sortofmainlemma}
$W$ can be embedded in a smooth toric variety $Z$ of $\bG_m^N$ in such a way that
$G$ is a scheme-theoretic intersection with the toric boundary. Moreover, intersecting with $W$
induces a bijection between toric divisors of $Z$ and irreducible components of $G$.
A collection of toric divisors has a non-empty intersection if and only if the corresponding
components of $G$ have a non-empty intersection.
\end{Lemma}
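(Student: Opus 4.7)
The plan is to apply the criterion of Hacking, Keel, and Tevelev from \cite[\S2]{HKT} advertised in the introduction. To do so, I need to verify that the pair $(W,G)$ together with the lattice $M \subset \cO^*(W \setminus G)$ satisfies the following two conditions: (a) the morphism $W \setminus G \to T_M := \Spec \bk[M]$ determined by $M$ is a closed immersion; and (b) for each non-empty intersection of components $G_{i_1} \cap \cdots \cap G_{i_k}$ of $G$, the valuations $\val_{G_{i_1}}, \ldots, \val_{G_{i_k}}$ extend to a $\bZ$-basis of a saturated sublattice of $\Hom(M, \bZ)$, and this stratum admits an affine neighborhood $U$ whose coordinate ring $\cO(U)$ is generated as a $\bk$-algebra by $M \cap \cO(U)$.

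First I would verify (a). By construction $z_0, \ldots, z_N$ is a basis of $H^0(S, \cM^{\otimes(r-1)})$ giving the closed embedding $S \hookrightarrow \bP^N$, and we have already established the isomorphism $W \setminus G \cong S \cap \bG_m^N$. Since $M$ is the lattice generated by the ratios $z_i/z_j$, the map $W \setminus G \to T_M$ factors as the closed immersion $W \setminus G \hookrightarrow \bG_m^N$ followed by a linear projection of tori, hence is itself a closed immersion.

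Next I would verify (b) by repeated application of Lemma~\ref{avoid}. Since $G$ has simple normal crossings, the stratum parameter satisfies $k \le \dim W$, well within the bound $|J| \le 2\dim W - 1$ available in the lemma. To obtain the dual basis, for each $j \in \{1,\ldots,k\}$ I apply Lemma~\ref{avoid} with $J = \{i_1,\ldots,i_k\} \setminus \{i_j\}$ and $i = i_j$, producing $m_j \in M$ with $\val_{G_{i_j}}(m_j) = 1$ and $\val_{G_{i_l}}(m_j) = 0$ for $l \ne j$; the matrix $(\val_{G_{i_l}}(m_j))_{l,j}$ is then the identity, which forces the vectors $\val_{G_{i_l}}$ to extend to a $\bZ$-basis as required for a smooth cone. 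To build the affine chart, I iteratively apply Lemma~\ref{avoid} starting from $i_1$ with $J=\emptyset$ to obtain some $T_1 \ni i_1$, then extend to handle $i_2$ by applying the lemma with $J = \{i_1\}$, and so on, keeping each previously chosen index outside the current $J$; after $k$ steps the intersection of the resulting opens is an affine $U$ containing $G_{i_1} \cap \cdots \cap G_{i_k}$ with $\cO(U)$ generated by $M$-monomials.

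The main obstacle, in my view, is not in verifying the individual conditions above --- which Lemma~\ref{avoid} is tailored to supply --- but rather in trusting the HKT machinery to package these local conditions into a single smooth toric variety $Z$ with the correct bijection between toric divisors and components of $G$ and the claimed intersection pattern. The bound $|J| \le 2\dim W - 1$ in Lemma~\ref{avoid}, rather than the naive $|J| \le \dim W$, is exactly what the criterion demands in order to combine affine charts associated to different strata while maintaining generation by $M$ on all overlaps; this wider margin is what ensures the gluing produces a \emph{smooth} toric variety, and not merely a toric embedding of $W \setminus G$.
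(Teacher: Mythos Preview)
Your overall strategy --- verify the Hacking--Keel--Tevelev criterion via Lemma~\ref{avoid} --- is the same as the paper's, and your verification of the dual-basis condition (the paper's condition~(2)) is correct. But two parts of the argument do not go through as written.

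First, your construction of the affine chart around a stratum is backwards. When you apply Lemma~\ref{avoid} with $i=i_j$ you obtain a subset $T_j\ni i_j$, so the resulting open $W\setminus\bigcup_{t\in T_j}G_t$ has $G_{i_j}$ \emph{removed} and therefore contains no point of $G_{i_1}\cap\cdots\cap G_{i_k}$; intersecting such opens only makes things worse. The paper does the opposite: it fixes $J=S=\{i_1,\ldots,i_k\}$ and lets $i$ range over $I\setminus S$. For each such $i$ the lemma produces $T\subset I\setminus S$, hence an affine open $U_{I\setminus T}$ that \emph{does} contain the stratum; the intersection over all $i\notin S$ is exactly $U_S=W\setminus\bigcup_{i\notin S}G_i$, which is then affine by separatedness with $\cO(U_S)$ generated by $M_S$.

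Second, your statement of the criterion omits the fan compatibility condition. Knowing that the cone attached to each stratum is smooth does not yet guarantee that the collection of cones forms a fan: one must check that the cones attached to any two strata $W_{S_1}$, $W_{S_2}$ meet along a common face. The paper records this as condition~(3) and verifies it by applying Lemma~\ref{avoid} with $J=(S_1\cup S_2)\setminus\{i\}$ for each $i\in S_1\setminus S_2$ and taking the product of the resulting units to obtain $m\in M$ with $\val_{G_i}m=1$ for $i\in S_1\setminus S_2$ and $\val_{G_j}m=0$ for $j\in S_2$. Since $|S_1|,|S_2|\le\dim W$, this is precisely where the bound $|J|\le 2\dim W-1$ (rather than $\dim W$) in Lemma~\ref{avoid} is used. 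Your final paragraph correctly senses that the extra margin is needed for gluing, but the check has to be carried out; the criterion as you stated it is not sufficient for the conclusion.
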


\begin{proof}
For any subset $S\subset I$, let 
$$W_S:=\bigcap_{i\in S} G_i\quad\hbox{\rm and}\quad U_S:=\bigcap_{i\not\in S}(W\setminus G_i).$$
In particular, we have
$$W_\emptyset:=W\quad\hbox{\rm and}\quad U_\emptyset=W\setminus G.$$
If $W_S$ is non-empty, we call it a  {\em stratum} of $W$ (which could be reducible).
For any $S\subset I$,  let 
$M_S=M\cap\cO(U_S)$.
By \cite[\S2]{HKT}, Lemma~\ref{sortofmainlemma} will follow if we can 
check the following three conditions:
  
 \begin{enumerate}
 \item For any stratum $W_S$, 
 $U_S$ is affine and $M_S$ generates $\cO(U_S)$.
  \item For any stratum $W_S$, and any $i\in S$, there exists $m\in M$
 with $\val_{G_i}m=1$ and $\val_{G_j}m=0$ for any $j\in S\setminus\{i\}$.
 \item The collection of cones in $M^\vee\otimes_\bZ\bQ$ convexly dual to cones spanned by semi-groups
 $M_S$ forms a smooth fan
 (as $W_S$ runs over strata).
 \end{enumerate}
 
 To check {\rm(1)}, we use $J=S$ in Lemma~\ref{avoid}, which then shows that
 $S$ can be written as an intersection of subsets $K_\alpha$ such that 
 $U_{K_\alpha}$ 
 is affine and $\cO(U_{K_\alpha})$ is generated by $M_{K_\alpha}$.
 By separatedness, $U_S=\bigcap_\alpha U_{K_\alpha}$
 is affine as well and  $\cO(U_S)$ is generated by restrictions
of $\cO(U_{K_\alpha})$ for all~$\alpha$,
hence by the union of $M_{K_\alpha}$ for all~$\alpha$, and
hence by $M_S$. 
To check {\rm(2)}, take $J=S\setminus\{i\}$  in Lemma~\ref{avoid}.
Finally, to check {\rm(3)}, it suffices to show 
that for any two strata $W_{S_1}$, $W_{S_2}$, there exists a unit $m\in M$ such that $\val_{G_i}m=1$ for any $i\in S_1\setminus S_2$ and $\val_{G_j}m=0$ for each $j\in S_2$.  
For each $i\in S_1\setminus S_2$, we apply Lemma~\ref{avoid} to $J=S_1\cup S_2\setminus\{i\}$, which gives a unit $m_i$
such that $\val_{G_i}m_i=1$ and $\val_{G_j}m_i=0$ for any $j\in J$. But then $m=\prod\limits_{i\in S_1\setminus S_2}m_i$ 
satisfies (3).
\end{proof}

It remains to show that a rational equivariant map $\phi:\,Z\dashrightarrow\bP^N$
is in fact a morphism. Let $N=M^\vee\otimes_\bQ\bR$. Let $C\subset N$ be a cone in the fan of $Z$
with rays that correspond to toric divisors which cut out divisors $G_i$, $i\in J$ on $W$ for some subset $J\subset I$.
Then $|J|\le\dim W$, and arguing as in Case III of Lemma~\ref{avoid}, we can find a prime divisor $G_k$ of type $E$ 
such that $k\not\in J$. This divisor corresponds to one of the coordinate hyperplanes $H\subset \bP^N$. 
Let $D\subset N$ be a cone in the fan of $\bP^N$ with rays that correspond to coordinate hyperplanes other than $H$.
It suffices to show that $C\subset D$. Dually, it suffices to show inclusion of semigroups $D^\vee\subset C^\vee$.
Let $m\in D^\vee$. Then 
$$m\in M\subset k(S)=k(W).$$
The principal divisor $(m)$ on $S$ has only one component with negative multiplicity, namely $H\cap S$.
Since $H$ is a general hyperplane, $\pi(G_j)\not\subset H$ for any $j\in J$.
It follows that $\ord_{G_j}m\ge0$ for any $j\in J$ and therefore $m\in C^\vee$.

This finishes the proof of Theorem~\ref{mainTh}. Now we prove Corollary~\ref{corollar}.
We first apply Hironaka's resolution of singularities \cite{Hi} to construct an embedded resolution of singularities \eqref{zfbgsz}.
Then we apply Theorem~\ref{mainTh}, and compactify $Z$ to a smooth proper toric variety, which we also call $Z$.
Now we apply \cite[Theorem 2.4]{CP}, which gives a proper toric morphism $\tilde Z\to Z$ such that 
the morphism $\tilde Z\to\bP^N$ is a composition of blow-ups
with smooth equivariant centers of codimension~$2$.

It remains to show that the proper transform $\tilde Y$ of $X$ in $\tilde Z$ is smooth 
and intersects the toric boundary transversally (the proof of the corresponding facts for $S$ is similar). 
This follows from \cite[Th~1.4 and Prop.~2.5]{Te}.
More precisely, these results apply as follows:
since $Z$ is smooth and $Y$ is smooth
and intersects the toric boundary transversally, the multiplication map 
$$\Psi:\,Y\times \bG_m^N\to Z$$
is smooth. 
The multiplication map 
$\tilde\Psi:\,\tilde Y\times \bG_m^N\to \tilde Z$
is then a pull-back of $\Psi$, and therefore it is also smooth.
Since $\tilde Z$ is smooth, this finally implies that 
$\tilde Y$ is smooth and intersects the toric boundary transversally.

\end{document}